\numberwithin{equation}{section}
\theoremstyle{plain}
\newtheorem{theorem}{Theorem}[section]
\newtheorem{proposition}[theorem]{Proposition}
\theoremstyle{definition}
\newtheorem{definition}[theorem]{Definition}
\theoremstyle{remark}
\newtheorem{remark}[theorem]{Remark}
\newcommand{\fr}{\penalty-20\null\hfill$\blacksquare$}                      
\newcommand{\lip}[1]{{\mathrm{lip}}({#1})}
\newcommand{\Lip}[1]{{\mathrm{Lip}}({#1})}
\newcommand{\supp}[1]{{\mathrm{supp}}{#1}}
\newcommand{\D}{{\rm D}}
\newcommand{\s}{{\rm S}}
\newcommand{\m}{\mathfrak {m}}
\renewcommand{\H}{{\sf H}}
\newcommand{\Ht}{\tilde{\sf H}}
\newcommand{\ppi}{{\mbox{\boldmath$\pi$}}}
\newcommand{\sfd}{{\sf d}}
\renewcommand{\d}{{\rm d}}
\newcommand{\lims}{\varlimsup}
\newcommand{\nchi}{{\raise.3ex\hbox{$\chi$}}}
\renewcommand{\D}{{\rm D}}
\newcommand{\e}{{\rm e}}
\newcommand{\eps}{\varepsilon}
\newcommand{\RCD}{{\sf RCD}}
\newcommand{\restr}[1]{\lower3pt\hbox{$|_{#1}$}}
\newcommand{\probt}[1]{{\mathscr P}_2(#1)}                  
\newcommand{\E}{\sf E}
\newcommand{\R}{\mathbb R}
\newcommand{\bb}[1]{|{\mathbf D}#1|}
\newcommand{\BV}{{\rm BV}}
\title{  Independence on $p$ of weak upper gradients on $\RCD$ spaces}
\author{ Nicola Gigli\thanks{
Universit\'{e}  Pierre et Marie Curie}\and Bang-Xian Han \thanks
{ Universit\'{e} Paris Dauphine}
}
\begin{document}
\date{\today}
\maketitle

\begin{center}
\begin{minipage}{130mm}{\small
\textbf{Abstract}: We study $p$-weak gradients on $\RCD(K,\infty)$ metric measure spaces and prove that they all coincide for $p>1$.  On proper spaces, our arguments also cover the extremal situation of $\BV$ functions.
}
\\
\\
\textbf{Keywords}: Weak gradient, Sobolev space, metric measure space, optimal transport.\\
\end{minipage}
\end{center}

\tableofcontents

\section{Introduction}
There is a large literature concerning the definition of the Sobolev space $W^{1,p}(X,\sfd,\m)$ of real valued functions defined on a metric measure space $(X,\sfd,\m)$, we refer to \cite{Heinonen07} and \cite{AmbrosioGigliSavare11-3} for historical comments and a presentation of the various - mostly equivalent - approaches.

The definition of space $W^{1,p}(X,\sfd,\m)$ comes with the definition of an object playing the role of the modulus of the distributional differential. More precisely, for $f\in W^{1,p}(X,\sfd,\m)$ it is well defined a non-negative function $|\D f|_p\in L^p(X,\m)$, called minimal $p$-weak upper gradient, which, if $(X,\sfd,\m)$ is a smooth space, coincides $\m$-a.e.\ with the modulus of the distributional differential of $f$.

A key difference between the smooth and non-smooth case is that in the latter the minimal $p$-weak upper gradient may depend on $p$: say for simplicity that $\m(X)=1$, then for $p<q\in(1,\infty)$ and $f\in W^{1,q}(X)$ one always has $f\in W^{1,p}(X)$ but in general only the inequality
\begin{equation}
\label{eq:1i}
|\D f|_p\leq |\D f|_q,\qquad\m-a.e.,
\end{equation}
holds. The inequality above can be strict even on doubling spaces, see \cite{DiMarinoSpeight13} for an example and more details on the issue.

Worse than this, one might have 
\begin{equation}
\label{eq:2i}
\text{a function $f\in W^{1,p}(X)$ with $f,|\D f|_p\in L^q(X)$ such that  $f\notin W^{1,q}(X)$,}
\end{equation}
see \cite{AmbrosioGigliSavare11-3} for an example proposed by Koskela.

To have a $p$-weak upper gradients independent on $p$ is a regularity property of the metric measure space in question. For instance, as a consequence of the analysis done in  \cite{Cheeger00} one has that on doubling space supporting a 1-1 weak local Poincar\'e inequality, equality always holds in \eqref{eq:1i}. In particular, this applies to ${\sf CD}(K,N)$ spaces with  $N<\infty$.

In this note we show that on $\RCD(K,\infty)$ spaces not only \eqref{eq:1i} holds with equality, but also that the situation in \eqref{eq:2i} never occurs. The argument is based on some regularization properties of the heat flow proved in \cite{Savare13} and on the density in energy of Lipschitz functions in Sobolev spaces established in \cite{AmbrosioGigliSavare11-3}.

At least in the case of proper $\RCD(K,\infty)$ spaces, this identification extends to $\BV$ functions. The problem in  non-proper spaces  is the lack of an approximation result of $\BV$ functions with Lipschitz ones.

\bigskip

This result, beside its intrinsic usefulness in Sobolev calculus, has also the pleasant conceptual effect of somehow relieving the definition of $\RCD$ spaces from the dependence on the particular Sobolev exponent $p=2$.  Recall indeed that one of the equivalent definitions of $\RCD(K,\infty)$ space is that of a ${\sf CD}(K,\infty)$ space such that $W^{1,2}(X)$ is Hilbert or equivalently such that
\[
|\D (f+g)|_2^2+|\D(f-g)|_2^2=2\big(|\D f|_2^2+|\D g|_2^2\big),\quad\m\text{-a.e. }\qquad\forall f,g\in W^{1,2}(X).
\]
As a consequence of our result, a posteriori one could replace the minimal 2-weak upper gradients with $p$-weak upper gradients in the above.

\section{Preliminaries}

\subsection{Sobolev classes}
We assume the reader familiar with the basic concepts of analysis in metric measure spaces and recall here the definition of Sobolev class $\s^p(X)$. We fix a complete and separable space $(X,\sfd,\m)$ such that $\m$ is a non-negative Borel measure finite on bounded sets.

\begin{definition}[Test plans] Let  $\ppi$ be a Borel probability measure on $C([0,1],X)$. We say  that $\ppi$ has bounded compression provided
there exists $C =C(\ppi)>0$ such that
\[
(\e_t)_\sharp \ppi \leq C\m,\qquad\forall t \in [0,1],
\]
where $\e_t:C([0,1],X)\to X$ is the evaluation map defined by $\e_t(\gamma):=\gamma_t$ for every $\gamma\in C([0,1],X)$.

For $q\in(1,\infty)$ we say that $\ppi$ is a $q$-test plan if it has bounded compression, is concentrated on $AC^q([0,1],X)$ and
\[
\int_0^1\int |\dot{\gamma}_t|^q \,\d\ppi(\gamma)\, \d t < +\infty.
\]
\end{definition}
The notion of Sobolev function is then introduced by duality with test plans.

\begin{definition}[Sobolev classes]
Let  $p\in(1,\infty)$. The space $\s^p(X)$ is the space of all  Borel functions $f : X \to   \mathbb{R}$ for which there exists a non-negative function $G\in L^p(X)$ such that for any $q$-test plan $\ppi$ the inequality
\[
\int |f(\gamma_1)- f(\gamma_0)|\, \d\ppi(\gamma) \leq \iint_0^1 G(\gamma_s)|\dot{\gamma}_s|\, \d s\, \d\ppi(\gamma)
\]
holds, where $\frac1p+\frac1q=1$. Any such $G$ is called $p$-weak upper gradient.
\end{definition}
It is immediate to see that for $f\in\s^p(X)$ there is a unique $p$-weak upper gradient of minimal $L^p$-norm: we shall call such $G$ minimal $p$-weak upper gradient and denote it by $|\D f|_p$.

Basic important properties of minimal weak upper gradients are the \underline{locality}, i.e.:
\[
|\D f|_p=|\D g|_p,\quad\m\text{-a.e. on }\{f=g\},\qquad\forall f,g\in\s^p(X),
\]
and the \underline{lower semicontinuity}, i.e.
\[
\left.\begin{array}{ll}
(f_n)\subset \s^p(X),\\
\sup_n\||\D f|_p\|_{L^p}<\infty,\\
f_n\to f\quad\m\text{-a.e.}
\end{array}\right\}\qquad\Rightarrow\qquad\left\{\begin{array}{l}
f\in \s^p(X)\text{ and}\\
\text{for any weak limit $G$}
\text{ of $(|\D f_n|_p)$ in $L^p(X)$}\\
\text{it holds }|\D f|_p\leq G\qquad\m\text{-a.e}.
\end{array}\right.
\]
The Sobolev space $W^{1,p}(X)$ is defined as $W^{1,p}(X):= \s^p\cap L^p(X)$  endowed with the norm
\[
\|f\|^p_{W^{1,p}(X)}:=\|f\|^p_{L^p(X)}+\||\D f|_p\|^p_{L^p(X)}.
\]
By $\Lip X$ we denote the space of Lipschitz functions on $X$ and for $f\in\Lip X$ the local Lipschitz constant $\lip f:X\to[0,\infty)$ is defined as
\[
\lip f(x):=\lims_{y\to x}\frac{|f(y)-f(x)|}{\sfd(x,y)},
\]
if $x$ is not isolated, 0 otherwise.

In \cite{AmbrosioGigliSavare11-3} the following approximation property has been proved:
\begin{proposition}[Density in energy of Lipschitz functions]\label{prop.approx}
Let $(X,\sfd,\m)$ be a complete separable metric space with $\m$ being Borel non-negative and assigning finite mass to bounded sets. Let $p\in(1,\infty)$ and $f\in W^{1,p}(X)$. 

Then there exists a sequence $(f_n)\subset W^{1,p}\cap\Lip X$ of functions with bounded support converging to $f$ in $L^p(X)$ and such that $\lip{ f_n}\to |\D f|_p$ in $L^p(X)$ as $n\to\infty$.
\end{proposition}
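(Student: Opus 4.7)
I would establish the proposition by a two-sided comparison between $\int|\D f|_p^p\,\d\m$ and a relaxed functional built from Lipschitz approximations. Define
\[
\mathcal{A}_p(f) := \inf\Big\{\liminf_{n\to\infty}\int\lip{f_n}^p\,\d\m \,:\, (f_n)\subset W^{1,p}\cap\Lip X \text{ of bounded support},\ f_n\to f \text{ in } L^p\Big\}
\]
for $f\in L^p(X)$, with value $+\infty$ if no admissible sequence exists. The identity $\mathcal{A}_p(f) = \int|\D f|_p^p\,\d\m$ for $f\in W^{1,p}(X)$ yields the proposition: a realizing sequence $(f_n)$ obeys $\|\lip{f_n}\|_{L^p}\to \||\D f|_p\|_{L^p}$, and by reflexivity one may assume $\lip{f_n}$ converges weakly in $L^p$ to some $G$; combining this with the easy direction below and the uniform convexity of $L^p$ (Radon--Riesz) will upgrade weak to strong convergence, forcing $\lip{f_n}\to|\D f|_p$ in $L^p$.

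\textbf{Easy direction ($\geq$).} For a Lipschitz $g$ with bounded support, $\lip g$ is a $p$-weak upper gradient of $g$, as follows from the elementary bound $|g(\gamma_1)-g(\gamma_0)|\leq\int_0^1\lip g(\gamma_t)|\dot\gamma_t|\,\d t$ valid along absolutely continuous curves. Given any admissible sequence $(f_n)$ with $\sup_n\|\lip{f_n}\|_{L^p}<\infty$, reflexivity produces a weak $L^p$-cluster point $G$ of $(\lip{f_n})$, and the lower-semicontinuity property of minimal weak upper gradients recalled above forces $f\in\s^p(X)$ with $|\D f|_p\leq G$ $\m$-a.e. Hence $\int|\D f|_p^p\,\d\m\leq\|G\|_{L^p}^p\leq\liminf_n\int\lip{f_n}^p\,\d\m$.

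\textbf{Hard direction ($\leq$).} This is the crux of the matter. A truncation-and-cutoff step, exploiting the chain rule and the locality of $|\D\cdot|_p$, reduces one to bounded $f$ of bounded support. To regularize such an $f$ into a Lipschitz function I would employ the Hopf--Lax inf-convolution
\[
Q_t f(x) := \inf_{y\in X}\Big\{ f(y) + \tfrac{1}{q\,t^{q-1}}\sfd(x,y)^q\Big\},\qquad \tfrac{1}{p}+\tfrac{1}{q}=1,
\]
which is Lipschitz with an explicit pointwise bound on $\lip{Q_t f}$, satisfies $Q_t f\to f$ pointwise and in $L^p$, and can be further multiplied by a spatial cutoff to ensure bounded support without affecting the limit. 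The genuine obstacle is controlling $\lims_{t\downarrow 0}\int \lip{Q_t f}^p\,\d\m$ by $\int |\D f|_p^p\,\d\m$: the natural route is a metric Hamilton--Jacobi inequality for $t\mapsto Q_t f$ coupled with a Lisini-type superposition principle providing $q$-test plans concentrated on near-minimizers of the Hopf--Lax problem. Testing the weak upper gradient inequality for $f$ against these plans and integrating in time closes the estimate. Reconstructing this nonsmooth HJ theory at the level of general $p\in(1,\infty)$ — rather than the familiar Hilbertian case $p=q=2$ — is where essentially all the technical work sits.
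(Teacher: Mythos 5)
The paper does not actually prove this proposition: it imports it verbatim from \cite{AmbrosioGigliSavare11-3}, so there is no internal argument to compare against. Measured against the proof in that reference, your architecture is the right one and essentially the same: you reduce the density-in-energy statement to the identity between the relaxed functional $\mathcal A_p$ (built from slopes of Lipschitz functions) and $\int|\D f|_p^p\,\d\m$, and your Radon--Riesz upgrade from weak to strong $L^p$-convergence of $\lip{f_n}$ is correct, as is the ``easy direction'' via reflexivity and the lower semicontinuity of minimal $p$-weak upper gradients. The truncation/cutoff reduction and the use of the Hopf--Lax semigroup $Q_t$ with the $q$-power cost are also the tools used in \cite{AmbrosioGigliSavare11-3}.

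The gap is that the hard direction $\mathcal A_p(f)\le\int|\D f|_p^p\,\d\m$ is only a programme, and it is exactly the main theorem of \cite{AmbrosioGigliSavare11-3} (equivalence of relaxed and weak gradients), not a routine estimate. Concretely, what is missing is the mechanism that converts the pointwise Hamilton--Jacobi subsolution inequality for $t\mapsto Q_tf$ into an integrated bound of $\int\lip{Q_tf}^p\,\d\m$ by the \emph{weak} upper gradient of $f$: one must produce, for suitable initial data $\mu\le C\m$, a genuine $q$-test plan $\ppi$ (bounded compression at \emph{every} time, finite $q$-action) along which the duality $\int Q_1f\,\d\mu_1-\int f\,\d\mu_0\le\iint_0^1|\D f|_p(\gamma_s)|\dot\gamma_s|\,\d s\,\d\ppi$ can be tested and matched against the time-integral of the HJ inequality. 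Your proposal to concentrate plans ``on near-minimizers of the Hopf--Lax problem'' faces a measurable curve-selection problem with no compactness available in a general complete separable $(X,\sfd,\m)$; the cited proof instead obtains such plans by running the $L^2$-gradient flow of the relaxed Cheeger-type energy and lifting it to a measure on curves via Lisini's superposition principle, comparing the two resulting entropy-dissipation estimates. Until that step is carried out, the inequality $\mathcal A_p(f)\le\int|\D f|_p^p\,\d\m$ --- and hence the proposition --- is not established; everything downstream of it in your write-up is fine.
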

We conclude this introduction noticing that the locality property of $p$-weak upper gradients allows for a natural definition of the space of locally Sobolev functions. By $L^p_{\rm loc}(X)$ we shall intend the space of Borel functions $G:X\to\mathbb R$ whose $p$-power is integrable on every bounded set.
\begin{definition}[The spaces $\s^p_{\rm loc}(X)$]
We say that $f\in\s^p_{\rm loc}(X)$ provided for any  Lipschitz function with bounded support $\nchi$ we have $\nchi f\in\s^p(X)$. In this case we define $|\D f|_p\in L^p_{\rm loc}(X)$ as
\[
|\D f|_p:=|\D(\nchi f)|_p,\qquad\m\text{-a.e. on }\{\nchi=1\},
\]
for every $\nchi$ as before.
\end{definition}
The role of the locality of the $p$-weak upper gradient is to ensure that the definition of $|\D f|_p$ is well posed. Also, it is not hard to check that $\s^p(X)\subset \s^p_{\rm loc}(X)$ and that a function $f\in \s^p_{\rm loc}(X)$ belongs to $\s^p(X)$ if and only if $|\D f|_p\in L^p(X)$.

For $p_1<p_2\in (1,\infty)$ and $q_1>q_2\in(1,\infty)$ such that $\frac1{p_i}+\frac1{q_i}=1$, the fact that the class of $q_1$-test plans is contained in the one of $q_2$-test plans grants that
\begin{equation}
\label{eq:easy}
\s^{p_2}_{\rm loc}(X)\subset \s^{p_1}_{\rm loc}(X)\qquad\text{and}\qquad|\D f|_{p_1}\leq |\D f|_{p_2}\quad\m\text{-a.e.}\qquad\forall f\in \s^{p_2}_{\rm loc}(X).
\end{equation}

\subsection{Heat flow on $RCD(K,\infty)$ space}
In order to keep this preliminary part as short as possible, we shall assume the reader familiar with the definition of  $\RCD(K,\infty)$ spaces and focus only on those properties they have which are relevant for our discussion. We refer  to \cite{AmbrosioGigliSavare11-2}, \cite{AmbrosioGigliMondinoRajala12} and \cite{Savare13} for the throughout discussion.

From now on we shall assume that $(X,\sfd,\m)$ is a $\RCD(K,\infty)$ space for some $K\in\R$ and that the support of $\m$ is the whole $X$. Recall that in particular we have $\m(B)<\infty$ for any bounded Borel set $B\subset X$.

In such space  the 2-Energy functional $\E:L^2(X)\to[0,\infty]$ defined as
\[
\E_2(f):=\left\{\begin{array}{ll}
\frac12 \int_X |\D f|_2^2\,\d \m,&\qquad\text{ if }f\in W^{1,2}(X),\\
+\infty,&\qquad\text{ otherwise,}
\end{array}\right.
\]
is a strongly local and regular Dirichlet form. We shall denote by $(\H_t)$ the associated linear semigroup. Then it can be seen that for every $f\in L^2(X)$ and $p\in[1,\infty)$ we have
\[
\|\H_t(f)\|_{L^p(X)}\leq \| f\|_{L^p(X)},\qquad\forall t\geq0,
\]
and thus $(\H_t)$ can, and will, be extended to a linear non-expanding semigroup on $L^p(X)$.

On the other hand, there exists a unique ${\sf EVI}_K$-gradient flow of the relative entropy functional on $(\probt X,W_2)$ which we shall denote by $(\mathcal H_t)$ and provides a one parameter semigroup of continuous linear operators on $(\probt X,W_2)$, see  \cite{AmbrosioGigliSavare11-2} and \cite{AmbrosioGigliMondinoRajala12} .

The non-trivial link between $(\H_t)$ and $(\mathcal H_t)$ is the fact that
\[
\begin{split}
&\text{for $\mu\in\probt X$ such that $\mu=f\m$ for some $f\in L^2(X)$}\\
&\text{we have $\mathcal H_t(\mu)=\H_t(f)\m$ for every  $t\geq 0$,}
\end{split}
\]
 and from the fact that $(\H_t)$ is self adjoint one can verify that for any $p\in[1,\infty)$ and every $t\geq 0$ it holds
\begin{equation}
\label{eq:version}
\H_t(f)(x)=\int f\,\d\mathcal H_t(\delta_x),\qquad\m-a.e.\ x\qquad\forall f\in L^p(X).
\end{equation}
Moreover, for $f\in L^\infty(X)$ and $t>0$ the formula
\[
\Ht_t(f)(x):=\int f\,\d\mathcal H_t(\delta_x),
\]
is well defined for any $x\in X$ producing a pointwise version of the heat flow for which the $L^\infty\to{\rm Lip}$ regularization holds:
\begin{equation}
\label{eq:linftylip}
\Lip{\Ht_t(f)}\leq \frac1{\sqrt{2I_{2K}(t)}}\|f\|_{L^\infty(X)},\qquad\forall t>0,
\end{equation}
where $I_{2K}(t):=\int_0^te^{2Ks}\,\d s$.

Th crucial regularization property of the heat flow that we shall use to identify $p$-weak gradients is the following version of the Bakry-\'Emery contraction rate, proved in \cite{Savare13}:
\begin{equation}
\label{eq:BE}
\lip{\Ht_t(f)}\leq e^{-Kt}\Ht_t(\lip f)\qquad\text{pointwise on }X,
\end{equation}
valid for every  Lipschitz function $f$ with bounded support and every $t\geq 0$.

We conclude recalling another useful regularity property of $\RCD$ spaces, this one concerning displacement interpolation of measures, see \cite{Rajala12-2} for a proof:
\begin{proposition}\label{prop:rajala}
Let $(X,\sfd,\m)$ be a $\RCD(K,\infty)$ space and $\mu,\nu$ two Borel probability measures with bounded support and such that $\mu,\nu\leq C\m$ for some $C>0$.

Then there exists a Borel probability measure $\ppi$ on $ C([0,1],X)$ such that $(\e_t)_\sharp\ppi\leq C'\m$ for every $t\in[0,1]$ for some $C'>0$ and for which the inequality
\[
\Lip{\gamma}\leq \sup_{x\in \supp(\mu)\atop y\in\supp(\nu)}\sfd(x,y),
\]
holds for every $\gamma$ in the support of $\ppi$.
\end{proposition}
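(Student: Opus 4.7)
The plan is to construct $\ppi$ as a lift to constant-speed geodesics of an optimal $W_2$-coupling between $\mu$ and $\nu$, and then to exploit the K-convexity of the relative entropy along $W_2$-geodesics, which holds in any ${\sf CD}(K,\infty)$ space and a fortiori on $\RCD(K,\infty)$ spaces, to control the density of the intermediate measures $(\e_t)_\sharp\ppi$. Since $\mu,\nu$ have bounded support and are dominated by $C\m$ with $\m$ finite on bounded sets, they have finite second moment, so an optimal $W_2$-coupling $\sigma$ of $\mu$ and $\nu$ exists by standard optimal-transport theory. Because $(X,\sfd)$ is a complete, separable, geodesic space, a measurable selection of constant-speed minimizing geodesics joining pairs in $\supp\sigma$ produces a Borel probability $\ppi$ on $C([0,1],X)$ concentrated on such geodesics with $(\e_0,\e_1)_\sharp\ppi=\sigma$.

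The Lipschitz bound is then immediate: for $\ppi$-a.e.\ $\gamma$, the curve is a constant-speed geodesic with $\gamma_0\in\supp\mu$ and $\gamma_1\in\supp\nu$, hence
\[
\Lip{\gamma}=\sfd(\gamma_0,\gamma_1)\leq \sup_{x\in\supp(\mu),\,y\in\supp(\nu)}\sfd(x,y),
\]
and this supremum is finite by boundedness of the supports. The substantive content of the statement is therefore the $L^\infty$-compression bound $(\e_t)_\sharp\ppi\leq C'\m$. The starting point is the K-convexity inequality
\[
\mathrm{Ent}_\m(\mu_t)\leq (1-t)\mathrm{Ent}_\m(\mu_0)+t\mathrm{Ent}_\m(\mu_1)-\tfrac{K}{2}t(1-t)W_2^2(\mu_0,\mu_1),
\]
applied to $\mu_t:=(\e_t)_\sharp\ppi$, which, combined with the uniform endpoint bound $\mu_0,\mu_1\leq C\m$, yields an $L\log L$ bound on $\mu_t$. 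One must then upgrade this integral control into a pointwise $L^\infty$-bound, which is where the specific geometry of $\RCD$ spaces enters decisively.

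The main obstacle is precisely this upgrade from entropy to $L^\infty$-density. Here one leverages the essentially non-branching property of $\RCD(K,\infty)$ spaces: if at some time $t_0$ the density of $\mu_{t_0}$ exceeded a threshold on a set $A$, then restricting $\ppi$ to the geodesics crossing $A$ at time $t_0$ would produce a new optimal dynamical plan whose marginals at $0$ and $1$ still satisfy the CD-type entropy bound, while its midpoint mass is overconcentrated; non-branching prevents the compensating cancellations that would otherwise be available in branching spaces, and one extracts a contradiction. A careful quantification of this argument yields $C'$ depending on $C$, $K$, and the diameter $D:=\sup_{x\in\supp\mu,\,y\in\supp\nu}\sfd(x,y)$, with the $K$-dependence appearing through a factor of the form $e^{|K|D^2/2}$ arising from the interaction of the CD inequality with the displacement distance. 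The combination of non-branching with entropy K-convexity is the nontrivial ingredient; once it is in place, the construction of $\ppi$ and the Lipschitz estimate are essentially soft.
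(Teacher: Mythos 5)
The paper does not actually prove this proposition: it is quoted from Rajala's paper \cite{Rajala12-2}, where it is established for general ${\sf CD}(K,\infty)$ spaces by an explicitly constructive scheme (the interpolating measures are built recursively over dyadic times by minimizing suitable entropy functionals, and the density bound is proved for that specific construction). Your blueprint --- lift an optimal coupling to constant-speed geodesics, get the Lipschitz bound for free, extract the compression bound from $K$-convexity of the entropy --- is a reasonable alternative in the $\RCD$ setting, and the first two steps are fine. The problem is the decisive third step.

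The gap is this: the $K$-convexity inequality in the definition of ${\sf CD}(K,\infty)$ holds along \emph{some} geodesic from $\mu$ to $\nu$, not a priori along the plan $\ppi$ you built by measurable selection; and even granting it, a single $L\log L$ bound on $(\e_t)_\sharp\ppi$ gives no pointwise control. The localization you sketch (restrict $\ppi$ to the curves hitting a bad set $A$ at time $t_0$) is the right move, but the mechanism you invoke --- ``essential non-branching prevents compensating cancellations'' --- is not an argument: what you actually need is that the entropy convexity inequality holds along the \emph{restricted} plan, whose endpoint marginals are no longer $\mu$ and $\nu$. On $\RCD(K,\infty)$ spaces this is available because the ${\sf EVI}_K$ property forces \emph{strong} displacement $K$-convexity, i.e.\ convexity along \emph{every} $W_2$-geodesic (Daneri--Savar\'e), and restrictions of optimal geodesic plans are again optimal geodesic plans. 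With that in hand the argument closes cleanly, and you should say how: for $\Gamma_A:=\{\gamma:\gamma_{t_0}\in A\}$ with $\ppi(\Gamma_A)>0$, the normalized restriction has endpoint marginals bounded by $(C/\ppi(\Gamma_A))\m$, hence endpoint entropies at most $\log(C/\ppi(\Gamma_A))$, while Jensen's inequality applied to its time-$t_0$ marginal (a probability concentrated on $A$) gives entropy at least $-\log\m(A)$; combining with the convexity inequality yields $\mu_{t_0}(A)\leq Ce^{|K|D^2/8}\m(A)$ for every Borel $A$, i.e.\ the claim with $C'=Ce^{|K|D^2/8}$, where $D$ is the distance bound between the supports. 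As written, your proof stops short of this and substitutes an appeal to non-branching that does not by itself deliver the conclusion; note also that Rajala's route has the advantage of requiring only ${\sf CD}(K,\infty)$, with no EVI or non-branching input, which is why the paper can simply cite it.
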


\section{Proof of the main result}
The identification of $p$-weak gradients will come via a study of the regularization properties of the heat flow.

\begin{proposition}\label{prop:1}
Let $p\in(1,\infty)$, $f\in W^{1,p}(X)$ and $t\geq 0$. Then $\H_t(f)\in W^{1,p}(X)$ and 
\[
|\D \H_tf|_p^p\leq e^{-pKt}\H_t(|\D f|_p^p),\qquad\m-a.e..
\]
\end{proposition}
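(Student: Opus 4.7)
The strategy is to reduce to Lipschitz functions with bounded support via the density-in-energy approximation of Proposition \ref{prop.approx}, apply the pointwise Bakry-\'Emery contraction \eqref{eq:BE} to the approximants, recover the correct $p$-th power form through Jensen's inequality for the Markov kernel $\mathcal H_t(\delta_x)$, and then pass to the limit using lower semicontinuity of the minimal $p$-weak upper gradient together with the non-expansivity of $\H_t$ on $L^1(X)$ and $L^p(X)$.

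More precisely, I would pick $(f_n)\subset \Lip X\cap W^{1,p}(X)$ with bounded support such that $f_n\to f$ in $L^p(X)$ and $\lip{f_n}\to |\D f|_p$ in $L^p(X)$. Each $f_n$ is in particular bounded, so \eqref{eq:linftylip} ensures that the pointwise version $\Ht_t(f_n)$ is a bounded Lipschitz function which, by \eqref{eq:version}, coincides $\m$-a.e.\ with $\H_t(f_n)\in L^p(X)$. Applying \eqref{eq:BE} to $f_n$ gives the pointwise bound $\lip{\Ht_t(f_n)}\leq e^{-Kt}\Ht_t(\lip{f_n})$ on $X$; since $\Ht_t(g)(x)=\int g\,\d\mathcal H_t(\delta_x)$ is an average against a probability measure, Jensen's inequality yields $(\Ht_t(\lip{f_n}))^p\leq \Ht_t((\lip{f_n})^p)$ pointwise. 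Combining these with the general bound $|\D\H_t(f_n)|_p\leq \lip{\Ht_t(f_n)}$ valid $\m$-a.e., one obtains
\[
|\D\H_t(f_n)|_p^p\leq e^{-pKt}\,\Ht_t\big((\lip{f_n})^p\big), \qquad \m\text{-a.e.}
\]

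To conclude, I would use that $(\lip{f_n})^p\to |\D f|_p^p$ in $L^1(X)$ and that $\H_t$ is non-expansive on $L^1$, so the right-hand side above converges in $L^1(X)$ to $e^{-pKt}\H_t(|\D f|_p^p)$; its $p$-th root then converges in $L^p(X)$ to $e^{-Kt}(\H_t(|\D f|_p^p))^{1/p}$, and $(|\D\H_t(f_n)|_p)$ is bounded in $L^p(X)$. Non-expansivity of $\H_t$ on $L^p$ gives $\H_t(f_n)\to\H_t(f)$ in $L^p(X)$, and lower semicontinuity yields $\H_t(f)\in \s^p(X)\cap L^p(X)=W^{1,p}(X)$ together with $|\D\H_t(f)|_p\leq G$ $\m$-a.e.\ for any weak $L^p$-limit $G$ of $(|\D\H_t(f_n)|_p)$ along a subsequence. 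The delicate point, and the main obstacle, is to transport the \emph{pointwise} bound to the limit: testing $|\D\H_t(f_n)|_p\leq e^{-Kt}(\Ht_t((\lip f_n)^p))^{1/p}$ against an arbitrary non-negative $\phi\in L^q(X)$ and combining weak convergence on the left with strong $L^p$ convergence of the envelope on the right gives $G\leq e^{-Kt}(\H_t(|\D f|_p^p))^{1/p}$ $\m$-a.e., whence $|\D\H_t(f)|_p^p\leq G^p\leq e^{-pKt}\H_t(|\D f|_p^p)$ $\m$-a.e. The Jensen step is what makes the right-hand side take the desired form $\H_t(|\D f|_p^p)$ rather than $(\H_t(|\D f|_p))^p$.
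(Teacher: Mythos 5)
Your proposal is correct and follows essentially the same route as the paper's proof: approximation via Proposition \ref{prop.approx}, the Bakry-\'Emery bound \eqref{eq:BE} combined with Jensen's inequality for the kernel $\mathcal H_t(\delta_x)$ to get $\lip{\Ht_t(f_n)}^p\leq e^{-pKt}\Ht_t(\lip{f_n}^p)$, $L^1$-continuity of the heat flow on the right-hand side, weak $L^p$-compactness of $(|\D \Ht_t(f_n)|_p)$, and lower semicontinuity of minimal $p$-weak upper gradients to conclude. You merely spell out a couple of steps the paper leaves implicit (the Jensen step and the weak-versus-strong pairing in the limit), which is fine.
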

\begin{proof} The fact that $\H_t(f)\in L^p(X)$ follows from the fact that $f\in L^p(X)$.
By Proposition \ref{prop.approx} we  can find a sequence $(f_n)\subset W^{1,p}\cap\Lip{X}$ converging to $f$ in $L^p(X)$ and such that  $\lip{f_n}\to |\D f|_p$ in $L^p(X)$.
By the property \eqref{eq:BE}  we know that
\begin{equation}
\label{eq:1}
\lip{\Ht_t(f_n)}^p \leq e^{-pKt}\Ht_t(\lip{f_n}^p),\qquad\text{pointwise on }X.
\end{equation}
The continuity in $L^1(X)$ of the heat flow grants that 
\begin{equation}
\label{eq:2}
\Ht_t(|\lip{f_n}|^p)\to \Ht_t(|\D f|_p^p),\qquad \text{in }L^1(X),
\end{equation}
so that in particular \eqref{eq:1} grants that the sequence $(\lip{\Ht_t(f_n)})$ is bounded in $L^p(X)$. Therefore also $(|\D \Ht_t(f_n)|_p)$ is bounded in $L^p(X)$ and up to pass to a subsequence, not relabeled, we can assume that it weakly converges to some $G\in L^p(X)$. By \eqref{eq:1} and \eqref{eq:2} we have $G\leq \Ht_t(|\D f|_p^p)$ $\m$-a.e.\ while the lower semicontinuity of $p$-weak upper gradients ensures that $\H_t(f)\in\s^p(X)$ with $|\D f|_p\leq G$ $\m$-a.e.\ and the thesis follows.
\end{proof}
\begin{proposition}\label{prop:2}
Let $p\in(1,\infty)$, $f\in W^{1,p}(X)$ such that $f,|\D f|_p\in L^\infty(X)$ and $t> 0$. Then $\Ht_t(f)$ is Lipschitz and 
\[
\lip{\Ht_t(f)}\leq e^{-Kt}\sqrt[p]{\Ht_t(|\D f|_p^p)},\qquad\text{pointwise on }X.
\]
\end{proposition}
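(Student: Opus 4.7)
The plan is to follow the template of Proposition \ref{prop:1} (approximation, Bakry-\'Emery, limit), but to obtain a pointwise bound on $\lip{\Ht_t(f)}$ by exploiting the $L^\infty$ hypotheses. The new ingredient is Jensen's inequality applied to the probability measure $\mathcal H_t(\delta_x)$, via the representation \eqref{eq:version}, which upgrades the linear contraction \eqref{eq:BE} to the $p$-th-power shape of the desired bound.

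By Proposition \ref{prop.approx} I pick $(f_n)\subset W^{1,p}\cap\Lip(X)$ with bounded support and $f_n\to f$ in $L^p(X)$, $\lip f_n\to|\D f|_p$ in $L^p(X)$; truncating at $\pm\|f\|_\infty$ (which does not increase $\lip f_n$), I may assume $\|f_n\|_\infty\le\|f\|_\infty$. Applying \eqref{eq:BE} and then Jensen gives
\[
\lip{\Ht_t(f_n)}(x)^p \le e^{-pKt}\,\Ht_t(\lip f_n)(x)^p \le e^{-pKt}\,\Ht_t((\lip f_n)^p)(x),\qquad\forall x\in X.
\]
For the right-hand side, $|a^p-b^p|\le p(a^{p-1}+b^{p-1})|a-b|$ combined with H\"older turns the $L^p$-convergence $\lip f_n\to|\D f|_p$ into $L^1$-convergence $(\lip f_n)^p\to|\D f|_p^p$, whence $\Ht_t((\lip f_n)^p)\to\Ht_t(|\D f|_p^p)$ in $L^1$ by non-expansiveness of $\Ht_t$. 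For the left-hand side, \eqref{eq:linftylip} combined with $\|f_n\|_\infty\le\|f\|_\infty$ makes $(\Ht_t(f_n))$ equi-Lipschitz, and since $\H_t(f_n)\to\H_t(f)$ in $L^p$ I can extract a subsequence with $\Ht_t(f_n)=\H_t(f_n)\to\H_t(f)=\Ht_t(f)$ $\m$-a.e.; combining $\supp\m=X$ with equi-Lipschitzianity upgrades this to pointwise convergence everywhere.

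The main obstacle is converting the $L^1$-convergence of $\Ht_t((\lip f_n)^p)$ into a pointwise bound on $\lip{\Ht_t(f)}$, since $\lip$ interacts poorly with pointwise-only limits. I would combine the pointwise estimate above with the slope-along-curves inequality $|\Ht_t(f_n)(\gamma_1)-\Ht_t(f_n)(\gamma_0)|\le\int_0^1\lip{\Ht_t(f_n)}(\gamma_s)|\dot\gamma_s|\,\d s$, valid for any Lipschitz curve $\gamma$, and average it against a test plan $\ppi_r$ supplied by Proposition \ref{prop:rajala} connecting normalized restrictions of $\m$ to small balls $B_r(x)$ and $B_r(y)$. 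The bounded-compression bound $(\e_s)_\sharp\ppi_r\le C_r\,\m$ converts curve-integrals of $\Ht_t((\lip f_n)^p)^{1/p}$ into integrals against $\m$ on a bounded region, where $L^1_{\rm loc}$-convergence propagates through the map $h\mapsto h^{1/p}$ by the elementary $|a^{1/p}-b^{1/p}|\le|a-b|^{1/p}$ and H\"older. Sending first $n\to\infty$ and then $r\to 0$, and invoking continuity of $\Ht_t(f)$ and of $z\mapsto\Ht_t(|\D f|_p^p)(z)$ (both Lipschitz by \eqref{eq:linftylip}, since $|\D f|_p^p\in L^\infty$), delivers the desired pointwise bound on $\lip{\Ht_t(f)}(x)$.
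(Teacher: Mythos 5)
Your argument is correct, and its endgame (Rajala's interpolation $\ppi_r$ between the normalized restrictions of $\m$ to $B_r(x)$ and $B_r(y)$, the bounded-compression step, then $r\downarrow 0$ and $y\to x$ using the Lipschitz continuity of $\Ht_t(|\D f|_p^p)$) coincides with the paper's. Where you differ is in how the integrand bound along the curves is produced. The paper does not re-approximate: it observes that $\ppi_r$ is a $q$-test plan, applies the defining weak-upper-gradient inequality to $\H_t(f)\in W^{1,p}(X)$, and then uses H\"older in $\ppi_r\otimes\d s$ together with the already-proved Proposition \ref{prop:1} to replace $|\D\Ht_t(f)|_p$ by $e^{-Kt}\sqrt[p]{\Ht_t(|\D f|_p^p)}$. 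You instead re-run the Lipschitz approximation of Proposition \ref{prop.approx} inside this proof, bound $\lip{\Ht_t(f_n)}$ pointwise via \eqref{eq:BE} and Jensen, use the elementary slope-along-curves inequality for the Lipschitz functions $\Ht_t(f_n)$, and pass to the limit in $n$ inside the plan integral using bounded compression; this trades the paper's H\"older step for an extra limit $n\to\infty$, and requires the small (true, but worth writing out) verification that truncation at $\pm\|f\|_\infty$ preserves $\lip f_n\to|\D f|_p$ in $L^p$. Your route is slightly more self-contained in that it does not need the notion of minimal $p$-weak upper gradient of $\H_t(f)$ at all, only its Lipschitz approximants; it is in fact exactly the strategy the paper later adopts in the $\BV$ remark (see \eqref{eq:bvlip}), where Proposition \ref{prop:1} is not available. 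The price is a longer chain of limit interchanges, all of which you justify adequately.
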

\begin{proof}
The fact that $\Ht_t(f)$ is Lipschitz follows from \eqref{eq:linftylip}. To prove the thesis, pick $x,y\in X$, $r>0$, consider the measures $\mu_{0,r}:=\m(B_r(x))^{-1}\m\restr{B_r(x)}$, $\mu_{1,r}:=\m(B_r(y))^{-1}\m\restr{B_r(y)}$ and let $\ppi$ be given by Proposition \ref{prop:rajala}. Then  we know that $(\e_s)_\sharp\ppi\leq C\m$ for some $C>0$ and every $s\in[0,1]$ and that $|\dot \gamma_s|\leq\sfd(x,y)+2r$ for $\ppi$-a.e.\ $\gamma$ and a.e.\ $s\in[0,1]$. In particular, $\ppi$ is a $q$-test plan, where $\frac 1p+\frac1q=1$, and since $\H_t(f)\in W^{1,p}(X)$ we know that
\[
\begin{split}
\Big|\int \Ht_t(f)\,\d(\mu_{1,r}- \mu_{0,r})\Big|&\leq \int |\Ht_t(f)(\gamma_1)-\Ht_t(f)(\gamma_0)|\,\d\ppi(\gamma)\\
&\leq\iint_0^1|\D \Ht_t(f)|_p(\gamma_s)|\dot\gamma_s|\,\d s\,\d\ppi(\gamma)\\
&\leq(\sfd(x,y)+2r)\sqrt[p]{\iint_0^1|\D \Ht_t(f)|_p^p(\gamma_s)\,\d s\,\d\ppi(\gamma)}\\
&\leq(\sfd(x,y)+2r)e^{-Kt}\sqrt[p]{\iint_0^1\Ht_t(|\D f|_p^p)(\gamma_s)\,\d s\,\d\ppi(\gamma)},
\end{split}
\]
having used Proposition \ref{prop:1} in the last step. Noticing that $\sfd(x,\gamma_s)\leq \sfd(x,y)+3r$ for $\ppi$-a.e.\ $\gamma$ and every $s\in[0,1]$ we deduce that
\[
\Big|\int \Ht_t(f)\,\d(\mu_{1,r}- \mu_{0,r})\Big|\leq (\sfd(x,y)+2r)e^{-Kt}\sqrt[p]{\sup_{B_{\sfd(x,y)+3r}}\Ht_t(|\D f|_p^p)},
\]
and letting $r\downarrow0$ and using the continuity of $\Ht_t(f)$ we deduce that
\[
\frac{|\Ht_t(f)(y)-\Ht_t(f)|}{\sfd(x,y)}\leq   e^{-Kt}\sqrt[p]{\sup_{B_{\sfd(x,y)+\eps}}\Ht_t(|\D f|_p^p)},\qquad\forall\eps>0.
\]
Letting $y\to x$ using the continuity of $\Ht_t(|\D f|_p^p)$ (which follows from the hypothesis $|\D f|_p\in L^\infty(X)$ and \eqref{eq:linftylip}) and the arbitrariness of $\eps>0$ we conclude.
\end{proof}

\begin{proposition}\label{prop:3}
Let $p,q\in(1,\infty)$ and $f\in \Lip X $.  Then
\[
|\D f|_q=|\D f|_p,\qquad\m-a.e..
\]
\end{proposition}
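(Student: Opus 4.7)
The inequality $|\D f|_p\leq |\D f|_q$ for $p<q$ is already contained in \eqref{eq:easy}, so, assuming $p<q$, it suffices to prove the reverse bound. By the locality of weak upper gradients and the definition of $\s^r_{\rm loc}(X)$, multiplying $f$ by a Lipschitz cut-off $\nchi$ with bounded support reduces the problem to the case in which $f$ itself is Lipschitz with bounded support. Then $f\in L^\infty(X)$, the weak upper gradients satisfy $|\D f|_p\leq\lip f\leq\Lip f$ and lie in $L^\infty(X)\cap L^s(X)$ for every $s\in[1,\infty)$, and $f\in W^{1,p}(X)\cap W^{1,q}(X)$.

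Apply Proposition~\ref{prop:2} with the exponent $p$: for every $t>0$,
\[
\lip{\Ht_t(f)}\leq e^{-Kt}\sqrt[p]{\Ht_t(|\D f|_p^p)}\qquad\text{pointwise on }X.
\]
Since $\Ht_t(f)$ is Lipschitz by \eqref{eq:linftylip}, its local Lipschitz constant is an upper gradient and in particular a $q$-weak upper gradient, so
\[
|\D \Ht_t(f)|_q\leq e^{-Kt}\sqrt[p]{\Ht_t(|\D f|_p^p)}\qquad\m\text{-a.e.}
\]

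Now let $t\downarrow 0$. By strong continuity of the heat semigroup in $L^s(X)$ for every $s\in[1,\infty)$ (which follows from the $L^2$ case combined with the $L^1$ and $L^\infty$ contractivity), $\Ht_t(f)\to f$ in $L^q(X)$ and $\Ht_t(|\D f|_p^p)\to |\D f|_p^p$ in $L^{q/p}(X)$. Extracting a subsequence $t_n\downarrow 0$ along which convergence is also pointwise $\m$-a.e.\ and applying the Brezis--Lieb lemma gives $\sqrt[p]{\Ht_{t_n}(|\D f|_p^p)}\to |\D f|_p$ strongly in $L^q(X)$. Hence $(|\D \Ht_{t_n}(f)|_q)$ is bounded in $L^q(X)$, so up to a further subsequence it converges weakly in $L^q(X)$ to some $G$; the pointwise inequality passes to this weak-strong limit, giving $G\leq |\D f|_p$ $\m$-a.e. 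The lower semicontinuity of $|\D\cdot|_q$ applied along $\Ht_{t_n}(f)\to f$ $\m$-a.e.\ then yields $|\D f|_q\leq G$ $\m$-a.e., whence $|\D f|_q\leq |\D f|_p$ $\m$-a.e., completing the proof.

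I expect the most delicate step to be promoting the $L^{q/p}$-convergence of $\Ht_t(|\D f|_p^p)$ to strong $L^q$-convergence of its $p$-th root, since only strong convergence on the right-hand side is sufficient to transfer the pointwise inequality through the weak limit of $(|\D \Ht_{t_n}(f)|_q)$ on the left. A subsidiary point worth double-checking is the strong continuity of $(\H_t)$ on $L^{q/p}(X)$, needed for this convergence and which is standard but implicit in the $\RCD$-literature rather than stated in the excerpt.
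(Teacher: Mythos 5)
Your proof is correct and follows essentially the same route as the paper: reduce to bounded support, apply Proposition~\ref{prop:2}, bound $|\D \Ht_t(f)|_q$ by $\lip{\Ht_t(f)}$, and let $t\downarrow 0$ using the lower semicontinuity of $q$-weak upper gradients. The only divergence is in the endgame: the paper raises the estimate to the power $q$, uses Jensen's inequality via \eqref{eq:version} to get $\Ht_t(|\D f|_p^p)^{q/p}\leq \Ht_t(|\D f|_p^q)$ and passes to the limit in the integrated inequality $\int|\D \H_t f|_q^q\,\d\m\leq e^{-qKt}\int|\D f|_p^q\,\d\m$, whereas you pass to the limit in the pointwise bound directly via a weak--strong pairing (where, incidentally, the elementary inequality $|a^{1/p}-b^{1/p}|^p\leq|a-b|$ for $a,b\geq0$ would give the strong $L^q$-convergence of $\sqrt[p]{\Ht_t(|\D f|_p^p)}$ without subsequences or the Brezis--Lieb lemma).
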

\begin{proof}Assume $p<q$. Then we already know by \eqref{eq:easy} that $|\D f|_p\leq |\D f|_q$ $\m$-a.e.. Notice that by the locality property of the weak upper gradients it is not restrictive to assume that $f$ has bounded support, so that in particular $f\in L^\infty\cap W^{1,p}(X)$. Let $t>0$ and apply Proposition \ref{prop:2} to deduce that $\Ht_t(f)\in\Lip X$ with
\[
\lip{\Ht_t(f)}^q\leq e^{-qKt}\Ht_t(|\D f|_p^p)^{\frac qp}\leq e^{-qKt}\Ht_t(|\D f|_p^q),\qquad\text{pointwise},
\]
having used Jensen's inequality and formula \eqref{eq:version}  in the last step and the fact that $|\D f|_p^q\in L^1(X)$, which follows from the fact that $f$ is Lipschitz bounded support. Since $|\D \Ht_t(f)|_q\leq \lip{\Ht_t(f)}$ $\m$-a.e., it follows that
\[
\int|\D \Ht_t(f)|_q^q\,\d \m\leq e^{-qKt}\int \Ht_t(|\D f|_p^q)\,\d\m,\qquad\forall t>0,
\]
and letting $t\downarrow 0$ and using the lower semicontinuity  of $q$-weak upper gradients we conclude that 
\[
\int|\D f|_q^q\,\d\m\leq \int |\D f|_p^q\,\d\m,
\]
which is sufficient to get the thesis.
\end{proof}
\begin{theorem}[Identification of weak upper gradients]\label{thm:main}
Let $p,q\in(1,\infty)$ and $f\in\s^p_{\rm loc}(X)$ such that $|\D f|_p\in L^q_{\rm loc}(X)$. Then $f\in\s^q_{\rm loc}(X)$ and
\[
|\D f|_q=|\D f|_p,\qquad\m-a.e..
\]
\end{theorem}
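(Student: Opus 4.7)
The approach is to extend the Lipschitz identification of Proposition~\ref{prop:3} to the Sobolev setting by using the heat flow as a smoothing operation, combining Propositions~\ref{prop:1} and~\ref{prop:3} with Jensen's inequality and the lower semicontinuity of weak upper gradients.

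\emph{Reduction.} By the definition of $\s^p_{\rm loc}(X)$ via Lipschitz cutoffs, the locality of weak upper gradients, and a preliminary truncation $f_k:=(-k)\vee(f\wedge k)$ to ensure local boundedness, it suffices to prove the global statement: if $g\in W^{1,p}(X)\cap L^\infty(X)$ has bounded support and $|\D g|_p\in L^q(X)$, then $g\in\s^q(X)$ with $|\D g|_q=|\D g|_p$ $\m$-a.e. The integrability $|\D g|_p\in L^q(X)$ comes from the hypothesis $|\D f|_p\in L^q_{\rm loc}(X)$ together with a Leibniz-type estimate. It is also enough to treat $q\geq p$: the case $q\leq p$ follows by exchanging the roles of $p$ and $q$, since by \eqref{eq:easy} we have $|\D g|_q\leq|\D g|_p\in L^p(X)$, so the hypotheses of the theorem hold with the two exponents swapped.

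\emph{Main estimate.} Assume $q\geq p$. Since $g\in L^\infty(X)$, formula \eqref{eq:linftylip} ensures that $\Ht_t g$ is Lipschitz for every $t>0$, and it equals $\H_t g$ $\m$-a.e.\ by \eqref{eq:version}. Proposition~\ref{prop:3} applied to the Lipschitz function $\Ht_t g$ then gives $|\D\Ht_t g|_q=|\D\Ht_t g|_p=|\D\H_t g|_p$ $\m$-a.e., while Proposition~\ref{prop:1} furnishes $|\D\H_t g|_p\leq e^{-Kt}\bigl(\H_t(|\D g|_p^p)\bigr)^{1/p}$ $\m$-a.e. Raising to the $q$-th power and applying Jensen's inequality pointwise to the convex map $s\mapsto s^{q/p}$ against the probability measure $\mathcal H_t(\delta_x)$ yields
\[
|\D\Ht_t g|_q^q\leq e^{-qKt}\,\H_t(|\D g|_p^q),\qquad \m\text{-a.e.}
\]
Integrating, and using that $|\D g|_p^q\in L^1(X)$ together with mass-preservation of the heat flow on $L^1$ (stochastic completeness on $\RCD(K,\infty)$ spaces), we obtain
\[
\int|\D\Ht_t g|_q^q\,\d\m\leq e^{-qKt}\int|\D g|_p^q\,\d\m.
\]

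\emph{Passage to the limit and conclusion.} As $t\downarrow 0$, $\Ht_t g\to g$ in $L^q(X)$ and, up to a subsequence, $\m$-a.e. The uniform $L^q$-bound above together with the lower semicontinuity of the $q$-weak upper gradient yields $g\in\s^q(X)$ and $\int|\D g|_q^q\,\d\m\leq\int|\D g|_p^q\,\d\m$. Once $g\in\s^q(X)$ is established, \eqref{eq:easy} gives the reverse pointwise inequality $|\D g|_p\leq|\D g|_q$ $\m$-a.e., and combined with the integral inequality this forces pointwise equality $|\D g|_p=|\D g|_q$ $\m$-a.e. The principal technicality is the reduction step, requiring a careful interplay of locality, truncation, and a Leibniz estimate to arrange bounded support and $L^\infty$ data while preserving $|\D g|_p\in L^q$; once this setup is in place, the rest of the proof is a direct combination of Propositions~\ref{prop:1} and~\ref{prop:3} with Jensen and lower semicontinuity.
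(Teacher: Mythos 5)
Your proposal is correct and follows essentially the same route as the paper: reduce by truncation and cut-off to a bounded, boundedly supported $f\in W^{1,p}(X)$, apply Proposition~\ref{prop:1} to control $|\D\H_t f|_p$, use the Lipschitz regularity of $\Ht_t f$ from \eqref{eq:linftylip} to invoke Proposition~\ref{prop:3} and identify $|\D\H_t f|_q$ with $|\D\H_t f|_p$, upgrade via Jensen and \eqref{eq:version}, integrate, and let $t\downarrow0$ using lower semicontinuity before combining with the easy inequality \eqref{eq:easy}. The only cosmetic differences are that you spell out the symmetry reduction to $q\geq p$ and the $L^1$-contractivity used when integrating, both of which the paper leaves implicit.
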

\begin{proof}
Assume that $p< q$ and notice that by \eqref{eq:easy} it is sufficient to prove that $|\D f|_p\geq |\D f|_q$ $\m$-a.e..  Replacing if necessary $f$ with $\max\{\min\{f,n\},-n\}$ and using the locality property of weak upper gradients and the arbitrariness of $n\in\mathbb N$ we can assume that $f\in L^\infty(X)$. Similarly, with a cut-off argument we reduce to the case in which $f$ has bounded support and thus in particular $|\D f|_p\in L^p\cap L^q(X)$.

With these assumptions we have $f\in W^{1,p}(X)$ and thus for $t>0$ Proposition \ref{prop:1} gives 
\[
|\D \H_tf|_p\leq e^{-Kt}\sqrt[p]{\H_t(|\D f|_p^p)},\qquad\m-a.e..
\]
Moreover, the fact that $f$ is bounded grants, by \eqref{eq:linftylip}, that $\H_t(f)$ has a Lipschitz representative $\Ht_t(f)$ and thus Proposition \ref{prop:3} gives
\[
|\D \H_tf|_q\leq e^{-Kt}\sqrt[p]{\H_t(|\D f|_p^p)},\qquad\m-a.e..
\]
Using the assumption that $|\D f|_p\in L^q(X)$ and Jensen's inequality in formula \eqref{eq:version} we deduce that $|\D \H_tf|^q_q\leq e^{-qKt} \H_t(|\D f|_p^q)$ $\m$-a.e.\ and thus
\[
\int |\D \H_tf|^q_q\,\d\m\leq e^{-qKt} \int\H_t(|\D f|_p^q)\,\d\m,\qquad\forall t>0.
\]
Letting $t\downarrow0$ and using the lower semicontinuity of $q$-weak upper gradients we conclude that
\[
\int |\D f|_q^q\,\d\m\leq \int |\D f|_p^q\,\d\m,
\]
which is sufficient to prove the thesis.
\end{proof}

\begin{remark}[The case of $BV$ functions]{\rm Recalling the notation and results of \cite{Ambrosio-DiMarino14} about $\BV$  functions and denoting by $\bb f$ the total variation measure of $f\in \BV(X)$, assume for a moment that $(X,\sfd,\m)$ is a \emph{proper} (=bounded closed sets are compact) $\RCD(K,\infty)$ space. Then the very same arguments just used allow to prove that
\begin{equation}
\label{eq:claimbv}
\begin{split}
&\text{if $f\in \BV(X)$ is such that $\bb f \ll\m$ with $\frac{\d\bb f }{\d\m}\in L^p_{\rm loc}(X)$ for some $p>1$,}\\
&\text{then $f\in\s^p_{\rm loc}(X)$ and $|\D f|_p=\frac{\d\bb f }{\d\m}$ $\m$-a.e.. }
\end{split}
\end{equation}
To see why, notice that the fact that $(X,\sfd)$ is proper and the definition of $\BV(X)$ ensures that for $f\in \BV(X)$ there is a sequence $(f_n)$ of Lipschitz functions with bounded support such that $(f_n)\to f$ in $L^1(X)$ and $\lip{f_n}\m \to \bb f $ weakly in duality with $C_c(X)$. Hence arguing as for Proposition \ref{prop:1} one gets by approximation that
\begin{equation}
\label{eq:mollbv}
f\in \BV(X)\qquad\Rightarrow\qquad \H_t(f)\in \BV(X)\qquad \bb{\H_t(f)}\leq e^{-Kt}\mathcal H_t(\bb f ).
\end{equation}
Then, using the a priori estimates on the relative entropy of $\mathcal H_t(\mu)$ in terms of the mass of $\mu$ (see \cite{AmbrosioGigliMondinoRajala12}) one obtains that for a sequence of non-negative measures $(\mu_n)$ weakly converging to some measure $\mu$ in duality with $C_b(X)$ and $t>0$, the sequence $n\mapsto g_n:=\frac{\d\mathcal H_t(\mu_n)}{\d\m}$ converges to $g:=\frac{\d\mathcal H_t(\mu)}{\d\m}$ weakly in duality with $L^\infty(X)$. Therefore, for $\ppi$ as in the proof of Proposition \ref{prop:2} and $(f_n)\subset \Lip X$ converging to $f\in \BV(X)$ and so that $\lip{f_n}\m\to \bb f $ weakly in duality with $C_b(X)$, we can pass to the limit in the inequality
\[
\begin{split}
\int|\H_t(f_n)(\gamma_1)-\H_t(f_n)(\gamma_0)|\,\d\ppi(\gamma)&\leq\iint_0^1\lip{\H_t(f_n)}(\gamma_t)|\dot\gamma_t|\,\d t\,\d\ppi(\gamma)\\
&\leq e^{-Kt}\iint_0^1\H_t(\lip{f_n})(\gamma_t)|\dot\gamma_t|\,\d t\,\d\ppi(\gamma),
\end{split}
\]
to deduce that
\[
\int|\H_t(f)(\gamma_1)-\H_t(f)(\gamma_0)|\,\d\ppi(\gamma)\leq e^{-Kt}\iint_0^1\frac{\d\mathcal H_t(|Df|_w)}{\d\m}(\gamma_t)|\dot\gamma_t|\,\d t\,\d\ppi(\gamma).
\]
In particular, arguing as in the proof of Proposition \ref{prop:2} we get that
\begin{equation}
\label{eq:bvlip}
f\in BV\cap L^\infty(X),\ |Df|_w\leq C\m\qquad\Rightarrow\qquad\lip{\H_t(f)}\leq e^{-Kt}\mathcal H_t\Big(\frac{\d|D f|_w}{\d\m}\Big).
\end{equation}
Then following the same lines of thought of Proposition \ref{prop:3}  and Theorem \ref{thm:main} the claim \eqref{eq:claimbv} follows.

Notice also that from \eqref{eq:mollbv} and with a truncation and mollification argument we deduce that
\begin{quote}
for $f\in \BV(X)$ with $\bb f\ll\m$ there is a sequence $(f_n)\subset \Lip X$ such that  $f_n\to f$ and  $\lip{f_n}\to\frac{\d\bb f }{\d\m}$ strongly in $L^1(X)$ as $n\to\infty$.
\end{quote}
In particular, the three notions of space $W^{1,1}(X)$ discussed  in \cite{Ambrosio-DiMarino14} all coincide.

All this if the space is proper. It is very natural to expect that the same results hold even without this further assumption, but in the general case it seems necessary to define $\BV$ functions taking limits of locally Lipschitz functions, rather than Lipschitz ones (see the proof of Lemma 5.2 in \cite{Ambrosio-DiMarino14}). The problem then consists in the fact that the property \eqref{eq:BE} is not available for locally Lipschitz functions with local Lipschitz constant in $L^1$. 
}\fr\end{remark}

\def\cprime{$'$} \def\cprime{$'$}

\end{document}